\let\amsmarkboth\markboth    %%%%%%%%Bug tetex 3.0 !!!!!
\let\markboth\amsmarkboth
   \def\bbl@arg{#1}%
   \def\bbl@arg{#2}%
   \edef\bbl@tempa{\the\toks@}%
   \edef\bbl@tempb{\the\toks8}%
   \protected@edef\bbl@tempa{%
     \noexpand\org@markboth{\bbl@tempa}{\bbl@tempb}}%
\DeclareRobustCommand*\ams@disablelinebreak{\def\\{ \ignorespaces}}
\def\maketitle{\par
   \@topnum\z@ %
   \@setcopyright
   \thispagestyle{firstpage}%
   \uppercasenonmath\shorttitle
   \ifx\@empty\shortauthors \let\shortauthors\shorttitle
   \else \andify\shortauthors
   \fi
   \@maketitle@hook
   \begingroup
   \@maketitle
   \toks@\@xp{\shortauthors}\@temptokena\@xp{\shorttitle}%
   \protected@edef\@tempa{%
     \@nx\markboth{\ams@disablelinebreak
       \@nx\MakeUppercase{\the\toks@}}{\the\@temptokena}}%
   \@tempa
   \endgroup
   \c@footnote\z@
   \@cleartopmattertags
}
\numberwithin{equation}{section}
\newcommand{\N}{{\mathbb N}}
\newcommand{\R}{{\mathbb R}}
\newcommand{\real}{{\mathbb R}}
\newcommand{\RR}{{\mathbb R}^2}
\newcommand{\diver}{{\rm div}\,}
\newcommand{\loc}{\operatorname{{loc}}}
\newcommand{\res}{\mathop{\hbox{\vrule height 7pt width .5pt depth 0pt
\vrule height .5pt width 6pt depth 0pt}}\nolimits}
\newtheorem{theorem}{Theorem}[section]
\newtheorem{proposition}[theorem]{Proposition}
\newtheorem{lemma}[theorem]{Lemma}
\theoremstyle{definition}
\newtheorem{definition}[theorem]{Definition}
\theoremstyle{remark}
\newtheorem{remark}[theorem]{Remark}
\newcommand{\eps}{\varepsilon}
\begin{document}

%% title page

\title[Flows of vector fields with point singularities]{Flows of vector fields with point singularities and the vortex-wave system}

\author[G. Crippa]{G. Crippa}
\address[G. Crippa]{Departement Mathematik und Informatik\\ Universit\"at Basel\\ Rheinsprung 21,
CH-4051, Basel, Switzerland}
\email{gianluca.crippa@unibas.ch}

\author[M. C. Lopes Filho]{M. C. Lopes Filho}
\address[M. C. Lopes Filho]{Instituto de Matem\'atica\\
Universidade Federal do Rio de Janeiro\\
Cidade Universit\'aria -- Ilha do Fund\~ao\\
Caixa Postal 68530\\
21941-909 Rio de Janeiro, RJ -- Brasil.}
\email{mlopes@im.ufrj.br}

\author[E. Miot]{E. Miot}
\address[E. Miot]{D\'epartement de Math\'ematiques\\ B\^atiment 425\\Universit\'e Paris-Sud 11 \\ 91405 Orsay, France}  \email{evelyne.miot@math.u-psud.fr}

\author[H. J. Nussenzveig Lopes]{H. J. Nussenzveig Lopes}
\address[H. J. Nussenzveig Lopes]{Instituto de Matem\'atica\\
Universidade Federal do Rio de Janeiro\\
Cidade Universit\'aria -- Ilha do Fund\~ao\\
Caixa Postal 68530\\
21941-909 Rio de Janeiro, RJ  -- Brasil.}
\email{hlopes@im.ufrj.br}

\date{\today}

\begin{abstract}
The vortex-wave system is a version of the vorticity equation governing the motion of 2D incompressible fluids in which vorticity is split into a finite sum of Diracs, evolved
through an ODE, plus an $L^p$ part, evolved through an active scalar transport equation. Existence of a weak solution for this system 
was recently proved by Lopes Filho, Miot and Nussenzveig Lopes, for $p>2$, but their result left open the existence and basic properties
of the underlying Lagrangian flow. In this article we study existence, uniqueness and the qualitative properties of the (Lagrangian flow for the) 
linear transport problem associated to the vortex-wave system. To this end, we study the flow associated to 
a two-dimen\-sional vector field which is singular at a moving point. We first observe that existence and uniqueness of the regular Lagrangian flow are ensured by combining previous results by Ambrosio and by Lacave and Miot. In addition we prove that, generically, the Lagrangian trajectories do not collide with the point singularity. In the second part we present an approximation scheme for the flow, with explicit error estimates obtained by adapting results by Crippa and De Lellis for Sobolev vector fields.  

\end{abstract}

\maketitle

\section{Introduction}

The purpose of this article is to study the flow associated to a particular class of vector fields that contain a point singularity, which arise as weak solutions of the vortex-wave system. For a smooth vector field $b:[0,T]\times \R^2\to \R^2$, the flow of $b$ is the unique map $X:[0,T]\times \R^2\to \R^2$ defined by
\begin{equation}
\label{eq:ODE} 
\begin{cases}
  \displaystyle \frac{d}{dt} X(t,x)=b\left(t,X(t,x)\right)\quad t\in [0,T],\\
\displaystyle X(0,x)=x\in \RR.
 \end{cases}
\end{equation}
It turns out that, in some cases, even if $b$ is not smooth, it is still possible to define an extended notion
of flow for $b$, nowadays called regular Lagrangian flow (see, e.g., Definition
\ref{def:lagrangian-flow} below). In their pioneering work, DiPerna and Lions \cite{dip-lions} proved the existence and uniqueness of the flow for vector fields belonging to $L^1(W^{1,1}_{\loc})$ with suitable decay at infinity and 
with bounded divergence (see assumptions $(H_1)$ and $(H_3)$ below).  The Sobolev-type regularity assumptions on $b$ were later relaxed
 by Ambrosio \cite{ambrosio}, allowing for $BV$ vector fields (see assumption $(H_2)$). There is a wide literature devoted to this issue, see e.g. \cite{ambrosio-2, ambrosio-crippa} and references therein  for additional or related results. The problem we address here is that the vector field associated to the vortex-wave system is not $BV$. 

\medskip

We will focus on the case where $b$ is given by
\begin{equation}\label{eq:ansatz}
 \begin{split}
  b(t,x)=v(t,x)+H(t,x),
 \end{split}
\end{equation}
where the field $v$ enters the class of vector fields considered in the theory of DiPerna and Lions and Ambrosio, and  where $H$ is a special vector field which is singular along a curve in space time. 
More precisely, we assume that the first component $v$ satisfies the same assumptions as in \cite{ambrosio}:
\begin{equation*}
 \begin{split}
 & (H_1)\quad \frac{v}{1+|x|}\in L^1\left( [0,T],L^1(\RR)\right)+ L^1\left( [0,T],L^\infty(\RR)\right),\\
& (H_2)\quad   v\in L^1\left([0,T],\text{BV}_{\loc}(\RR)\right),\\
& (H_3)\quad \diver(v)\in L^1\left([0,T],L^\infty(\R^2)\right).
\end{split}
\end{equation*}
The result of Ambrosio \cite{ambrosio} ensures existence and uniqueness of the regular Lagrangian flow associated to such fields.
In addition, in our context, we require the following assumption:
\begin{equation*}
\begin{split} 
&(H_4)\quad v\in L^\infty\left([0,T],L^q(\RR)\right)\quad \text{for some } 2<q\leq +\infty.\end{split}\end{equation*}
Next, we define our singular part $H$ as follows. We consider a given Lipschitz trajectory  in $\R^2$:
\begin{equation}\label{eq:ODEvortex}
z\in W^{1,\infty}([0,T],\R^2).
\end{equation}
We introduce the map
\begin{equation*}
 \quad K:\RR\setminus\{0\}\to \RR,\quad K(x)=\frac{x^\perp}{|x|^2}=\frac{(-x_2,x_1)}{|(x_1,x_2)|^2}
\end{equation*}
and we define 
\begin{equation}\label{eq:defH}
 H(t,x)=K\left( x-z(t)\right).
\end{equation}
Then $H$ satisfies $(H_1)$ and $(H_3)$: actually, it is divergence free. It does not satisfy $(H_2)$ therefore such a field is not covered by the result of Ambrosio \cite{ambrosio}. However note that $H$ is smooth off of the 
set $\left\{(t,z(t)),\:\: t\in [0,T]\right\}$.

The structure described by \eqref{eq:ansatz} includes that of solutions of the vortex-wave system in the special case of a single vortex together with compactly supported $L^p$ vorticity, $p>1$.

\medskip

Next we recall, following DiPerna and Lions \cite{dip-lions} and Ambrosio \cite{ambrosio}, the definition of regular Lagrangian flow.  We denote by $\mathcal{L}^2$ the Lebesgue measure on $\RR$.
\begin{definition}[Regular Lagrangian flow]
 \label{def:lagrangian-flow}
We say that a map
$X : [0, T]\times \R^2\to \RR$ is a regular Lagrangian flow for the vector field $b$ if:

(i) There exists an $\mathcal{L}^2$-negligible set $S\subset \RR$ such that for all $x\in \RR\setminus S$ the map $t\mapsto b(t, X(t, x))$ belongs to $L^1([0,T])$, and  
 $$X(t,x)=x+\int_0^t b(s, X(s,x))\,ds,\quad \forall t \in [0, T].$$

(ii) For all $R>0$ there exists $L_R>0$ such that  
$$X(t,\cdot)_\#(\mathcal{L}^2\res B_R)\leq L_R\mathcal{L}^2,\quad \forall t\in[0,T],$$ i.e.  
$\mathcal{L}^2\left(X(t, \cdot)^{-1}(A)\cap B_R\right)\leq L_R\mathcal{L}^2(A)$
for every Borel set $A\subset \RR$.

\end{definition}
In Section \ref{section:proof} we combine the abstract theory by Ambrosio \cite{ambrosio-2} (Theorem \ref{thm:abstract} below), see also \cite{ambrosio-crippa}, exploiting the link between the ODE and the continuity and transport equations (see \eqref{eq:PDE}-\eqref{eq:transport}),  with an extension of a renormalization result by Lacave and Miot \cite{clacave-miot} to show existence and uniqueness for the regular Lagrangian flow of $b$. Moreover, we prove the additional property that for $\mathcal{L}^2$-a.e. $x\in \R^2$ the trajectory starting from the point $x$ does not collide with the singularity point. More precisely, we prove the following theorem:

\begin{theorem}\label{thm:main}
 Let $b$ be as in \eqref{eq:ansatz}, where $v$ satisfies $(H_1)-(H_2)-(H_3)-(H_4)$ and where $H$ is given by \eqref{eq:defH}. Then there  exists a unique regular Lagrangian flow.  Moreover, for $\mathcal{L}^2$-a.e. $x\in \R^2$ we have $$X(t,x)\neq z(t),\quad \forall
t\in [0,T].$$ 
\end{theorem}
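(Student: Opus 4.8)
The plan is to split the proof of Theorem \ref{thm:main} into two parts: first existence and uniqueness of the regular Lagrangian flow, then the non-collision property. For the first part, I would translate the ODE problem into the PDE language. The vector field $b = v + H$ satisfies $(H_1)$ and $(H_3)$ (since $v$ does and $H$ is divergence-free with the right decay), so it fits into the abstract framework of Ambrosio's superposition/well-posedness machinery, provided one can establish the \emph{renormalization property}: that any bounded distributional solution $u$ of the transport equation $\partial_t u + b\cdot\nabla u = 0$ is also a solution of $\partial_t \beta(u) + b\cdot\nabla\beta(u)=0$ for $\beta\in C^1$. The field $v$ is $BV$ so it renormalizes by Ambrosio's theorem \cite{ambrosio}; the issue is the singular Biot--Savart kernel $H(t,x)=K(x-z(t))$. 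Here I would invoke (an adaptation of) the renormalization result of Lacave and Miot \cite{clacave-miot}, which handles exactly vector fields with a point singularity of the form $x^\perp/|x|^2$; the key point is that $H \in L^\infty_t L^p_{\loc}$ for all $p<2$ and is smooth away from the moving point $z(t)$, while assumption $(H_4)$ controls $v$ in $L^q$ with $q>2$ so that the product $bu$ makes sense and the commutator estimates close. Combining this renormalization property with $(H_1)$, $(H_3)$ and the abstract Theorem \ref{thm:abstract} yields existence and uniqueness of the regular Lagrangian flow $X$, with a uniform compressibility bound coming from $\diver b = \diver v \in L^1_t L^\infty_x$ (in fact one gets that $X(t,\cdot)_\#\mathcal{L}^2$ is comparable to $\mathcal{L}^2$ with constants $\exp(\pm\|\diver v\|_{L^1_tL^\infty_x})$).

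For the second part, the non-collision property, I would argue by contradiction and an energy/logarithmic estimate. Suppose the set $A = \{x : X(t,x)=z(t) \text{ for some } t\in[0,T]\}$ has positive Lebesgue measure. Consider the relative position $Y(t,x) = X(t,x) - z(t)$, which solves $\dot Y = v(t, X) + K(Y) - \dot z(t)$. Formally, $\frac{d}{dt}\log|Y(t,x)|^2 = \frac{2Y\cdot\dot Y}{|Y|^2} = \frac{2Y\cdot(v(t,X)-\dot z(t))}{|Y|^2}$, since $Y\cdot K(Y) = Y\cdot Y^\perp/|Y|^2 = 0$ --- the singular rotational part is exactly tangential and contributes nothing. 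Thus $\left|\frac{d}{dt}\log|Y(t,x)|\right| \le \frac{|v(t,X(t,x))| + \|\dot z\|_\infty}{|Y(t,x)|}$, which is integrable in time away from collisions but potentially blows up at a collision time. To make this rigorous I would integrate against a test function in $x$: multiply by a cutoff, integrate $\int_{B_R} |\log|Y(t,x)||\,dx$, push forward by the flow using the compressibility bound (ii) to turn the spatial integral over $x$ into an integral over the pushed-forward measure, bounding $\int \frac{|v(t,y)|}{\,\cdot\,}$ by $\|v(t,\cdot)\|_{L^q}$ via Hölder and the fact that $1/|y-z(t)|$ belongs to $L^{q'}_{\loc}$ since $q'<2$. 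This shows $\int_{B_R}\sup_{t}|\log|Y(t,x)||\,dx < \infty$, hence $\log|Y(t,x)|$ cannot reach $-\infty$ for $x$ in a positive-measure set, contradicting the assumption that $A$ has positive measure.

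The main obstacle, I expect, is the rigorous justification of the chain-rule / logarithmic estimate for the non-smooth flow: $X(t,\cdot)$ is only defined almost everywhere, the trajectory $t\mapsto b(t,X(t,x))$ is merely $L^1$ in time for a.e. $x$, and $1/|Y|$ is not a priori integrable along trajectories. The clean way around this is not to differentiate $\log|Y|$ pointwise but to work at the level of the transport equation, or equivalently to use the quantitative stability estimates of Crippa and De Lellis together with an approximation of $b$ by smooth fields $b_\eps$ for which the computation is legitimate, and then pass to the limit using the compressibility bound (which is uniform in $\eps$) and the $L^q$ control on $v$ from $(H_4)$. One must check that $q>2$ (equivalently $q'<2$) is precisely what is needed so that $y\mapsto |y-z(t)|^{-1}$ lies in $L^{q'}$ on bounded sets, uniformly in $t$, since $z$ is bounded; this is where assumption $(H_4)$ is used in an essential way. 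A secondary, more technical point is verifying that the adaptation of the Lacave--Miot renormalization result genuinely covers the present setting, namely a \emph{moving} singularity $z(t)$ with $z\in W^{1,\infty}$ rather than a fixed one, and the superposition of $H$ with a general $BV$ field $v$; the time-dependence of $z$ is harmless because $z$ is Lipschitz, and the sum is handled by noting that the commutator splits and each piece is controlled separately.
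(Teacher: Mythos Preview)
Your treatment of existence and uniqueness matches the paper's: combine Ambrosio's abstract Theorem~\ref{thm:abstract} with the Lacave--Miot renormalization argument (Proposition~\ref{prop:uniqueness} in the paper), exploiting that $H$ is smooth off the curve $\{(t,z(t))\}$ and that $H\cdot\nabla\chi_\eps=0$ for radial cutoffs centered at $z(t)$.

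For non-collision, your route diverges from the paper. The paper does \emph{not} use a logarithmic energy; instead it proves a quantitative estimate (Proposition~\ref{lemma:bad-set}): $\mathcal{L}^2(P(\eps,R))\leq C\eps^{1-2/q}$, via a time-slicing argument adapted from \cite{italiens-miot}. One partitions $[0,T]$ into $\sim\eps^{-\beta}$ intervals, separates off (by Chebyshev and compressibility) the set where $\int_{t_i}^{t_{i+1}}|v(s,X(s,x))|^q\,ds$ is large on some subinterval, and on the remaining ``good'' set uses your same orthogonality observation $\big|\tfrac{d}{dt}|Y|\big|\le|v|+|\dot z|$ together with H\"older in time to show $|Y|$ varies by at most $\eps$ on each subinterval, forcing $X(t_{i+1},x)\in B(z(t_{i+1}),2\eps)$ at some discrete time, whose preimage has measure $\lesssim\eps^2$ by compressibility. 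The stronger rate $\eps^{1-2/q}$ is not incidental: it is reused in the proof of Theorem~\ref{thm:main2}.

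Your logarithmic approach would also deliver the qualitative non-collision statement (with the weaker Chebyshev rate $|\log\eps|^{-1}$), but there is a gap as written. After pushing forward by the flow, the spatial integral becomes $\int_{\R^2}\frac{|v(s,y)|+\|\dot z\|_\infty}{|y-z(s)|}\,dy$, and the constant piece $\|\dot z\|_\infty/|y-z(s)|$ is \emph{not} in $L^1(\R^2)$, only in $L^1_{\loc}$. Since under $(H_1)$--$(H_4)$ alone you have no a priori confinement $X([0,T]\times B_R)\subset B_{\widetilde R}$ (that bound, \eqref{ineq:bounded-flow}, uses $(H'_1)$), the localization must come from the integrand itself: replace $\log|Y|$ by $(\log|Y|)_-$, which inserts the indicator $\mathbf{1}_{|Y|<1}$ on the right-hand side and makes both terms integrable after push-forward. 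With that fix your argument closes, modulo the approximation/chain-rule issue you correctly flagged as the main technical point.
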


Observe that, by the very definition of regular Lagrangian flow, the absolute continuity of the measure $X(t,\cdot)_\#\mathcal{L}^2$ with respect to $\mathcal{L}^2$ implies, by Fubini's theorem, that for $\mathcal{L}^2$-a.e. $x\in \R^2$ we have $X(t,x)\neq z(t)$ for $\mathcal{L}^1$-a.e. $t\in [0,T]$. The main point of Theorem \ref{thm:main} is that collisions between the Lagrangian trajectories and the singularity point are avoided for \emph{all} $t\in [0,T]$. Indeed Proposition \ref{lemma:bad-set} yields a quantitative control of the amount of Lagrangian trajectories getting closer than $\eps$ to the point singularity: the proof of this proposition uses the additional assumption $(H_4)$. We mention that an analogous control on the trajectories was performed in the setting of the Vlasov-Poisson equation with singular fields by Caprino, Marchioro, Miot and Pulvirenti \cite{italiens-miot}.

\medskip

In the second part of this work we present an effective construction of the regular Lagrangian flow by an approximation argument.  In contrast with the point of view adopted in the first part, this construction does not rely on the link between the ODE and the PDE. Moreover, we provide a quantitative rate of convergence, by extending to our setting the estimates performed by Crippa and De Lellis  \cite{crippa-delellis} for vector fields without singular part.  We restrict ourselves to vector fields satisfying the stronger assumptions:
\begin{equation*}
 \begin{split}
 & (H'_1)\quad v\in L^\infty( [0,T]\times \RR),\\
& (H'_2)\quad  \nabla v\in L^1\left([0,T],L^p(\RR)\right)\quad \text{for some } 1<p\leq +\infty,\\
& (H'_3)\quad \diver(v)\in L^1\left([0,T],L^\infty(\R^2)\right).
\end{split}\end{equation*} 
In Section \ref{section:approx} we define a suitable smooth approximation $(b_n)_{n\in \N}$ of $b$,
and we denote by $X_n$ the unique corresponding classical flow.  We prove the following theorem:
\begin{theorem}\label{thm:main2} Let $v$ satisfy $(H'_1)-(H'_2)-(H'_3)$. Let $R>0$. There exists $\widetilde{R}$ and $C$, depending 
on $R$, $T$, $\|v\|_{L^\infty(L^\infty)}$, $\|\diver (v)\|_{L^1(L^\infty)}$,  $\|\nabla v\|_{L^1(L^p)}$, and $\|z\|_{W^{1,\infty}}$, such that, denoting by
\begin{equation*}\begin{split}
 \delta(n,m)&= \|b_n-b_m\|_{L^1([0,T]\times B_{\widetilde{R}})}
\end{split}
\end{equation*}
the following estimate holds:
\begin{equation*}
 \int_{B_R} \sup_{t\in [0,T]} |X_n(t,x)-X_m(t,x)|\,dx\leq \frac{C}{|\ln \delta(n,m)|^{1/3}}.
\end{equation*}
In particular, $(X_{n})_{n\in \N}$ is a Cauchy sequence in $L^1_{\loc}\left(\RR,L^\infty\left([0,T]\right)\right)$, hence for $\mathcal{L}^2$-a.e. $x\in \RR$ the sequence $(X_n(\cdot,x))_{n\in \N}$ converges uniformly to $X(\cdot,x)$ on $[0,T]$, where $X$ is the regular Lagrangian flow relative to $b$ as in Theorem \ref{thm:main}. 
\end{theorem}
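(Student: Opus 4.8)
The plan is to follow the strategy of Crippa and De Lellis \cite{crippa-delellis}, based on controlling an integral functional measuring the logarithmic distance between two flows, but adapted to handle the singular part $H$. First I would introduce, for a fixed radius $\rho>0$ to be chosen, the quantity
\begin{equation*}
\Phi_{n,m}(t)=\int_{B_R}\log\left(1+\frac{|X_n(t,x)-X_m(t,x)|}{\rho}\right)\,dx,
\end{equation*}
and differentiate it in time. The derivative is bounded by $\int_{B_R}\frac{|b_n(t,X_n)-b_m(t,X_m)|}{\rho+|X_n-X_m|}\,dx$, which I split into $\frac{|b_n(t,X_n)-b_n(t,X_m)|}{\rho+|X_n-X_m|}$ plus $\frac{|b_n(t,X_m)-b_m(t,X_m)|}{\rho+|X_n-X_m|}$. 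The second term is the "consistency" error: bounding the denominator below by $\rho$ and pushing forward the measure $\mathcal{L}^2\res B_R$ under $X_m(t,\cdot)^{-1}$ (whose compressibility constant is controlled by $\|\diver v\|_{L^1(L^\infty)}$, since $H$ is divergence free and the smooth approximations inherit this), this term contributes at most $\frac{C}{\rho}\,\delta(n,m)$ after time integration, provided all trajectories from $B_R$ stay inside the larger ball $B_{\widetilde R}$ on $[0,T]$ — which follows from the uniform bounds on $\|v\|_{L^\infty}$, $\|z\|_{W^{1,\infty}}$ and a crude bound on $|H|$ along trajectories (see below, or simply take $\widetilde R$ large enough using $\sup|X_n|\le R+T\|v\|_\infty+\int_0^T|H(s,X_n(s,x))|\,ds$, where the last integral is finite for a.e. $x$ but not uniformly — this is exactly the delicate point addressed in the first part of the paper).

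The first term is the one requiring the maximal-function / difference-quotient estimate. Writing $b_n=v_n+H_n$ with $v_n$ the mollification of $v$ and $H_n(t,x)=K_n(x-z_n(t))$ a smooth truncation/mollification of $H$, I would treat the two pieces separately. For $v_n$, the standard inequality $|v_n(t,y)-v_n(t,y')|\le C|y-y'|\left(M\nabla v_n(t,y)+M\nabla v_n(t,y')\right)$ gives, after integrating over $B_R$ and using the weak-$L^1$ bound on the maximal function together with the push-forward estimate, a contribution of the form $C\left(\|\nabla v\|_{L^1(L^p)}+\text{(lower order)}\right)$ times a factor coming from the $L^p-L^1$ interpolation on the ball $B_{\widetilde R}$. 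For $H_n$, which is not Sobolev uniformly in $n$, the key observation is that $K$ is smooth and $|\nabla K(x)|\le C/|x|^2$, so one needs a pointwise bound on $|X_n(t,x)-z_n(t)|$ and $|X_m(t,x)-z_m(t)|$ from below along the relevant trajectories. This is where Proposition \ref{lemma:bad-set} enters: it furnishes the quantitative control $\mathcal{L}^2\left(\{x\in B_R:\ \inf_{t}|X_n(t,x)-z_n(t)|<\eps\}\right)\le C\eps^{\gamma}$ for some $\gamma>0$, uniformly in $n$ (using assumption $(H'_2)$ in the stronger form, which implies $(H_4)$). Splitting $B_R$ into the "good set" where trajectories stay at distance $\ge\eps$ from the singularity — where $H_n$ is Lipschitz with constant $C/\eps^2$ — and the "bad set" of measure $\le C\eps^\gamma$ — where we simply use $|H_n|\le C/\eps$ along $\partial B_\eps$, or rather bound the contribution of the bad set to $\Phi_{n,m}$ trivially by $C\eps^\gamma\log(1+\widetilde R/\rho)$ — gives the estimate for the $H$-part.

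Assembling the Grönwall inequality yields $\Phi_{n,m}(t)\le \exp\left(C/\eps^2\right)\left(\frac{C}{\rho}\delta(n,m)+C\eps^\gamma\right)$ for every $t$; by Chebyshev, for every threshold $\sigma>0$,
\begin{equation*}
\int_{B_R}\sup_t|X_n-X_m|\,dx\le \sigma\,|B_R|+\frac{\widetilde R\cdot|B_R|}{\log(1+\sigma/\rho)}\,\Phi_{n,m}\quad\text{(roughly)},
\end{equation*}
and then one optimizes: choose $\eps$ as a small negative power of $|\ln\delta|$ so that $\exp(C/\eps^2)$ stays sub-polynomial in $1/\delta$, choose $\rho=\delta$, and choose $\sigma$ to balance the two terms. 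The three nested optimizations ($\eps$, $\rho$, $\sigma$) are what produce the exponent $1/3$ in $|\ln\delta(n,m)|^{-1/3}$. The Cauchy property in $L^1_{\loc}(\mathcal{L}^2; L^\infty([0,T]))$ is then immediate since $\delta(n,m)\to0$ as $n,m\to\infty$ (the $b_n$ converge to $b$ in $L^1_{\loc}$), and the a.e. uniform limit is identified with the regular Lagrangian flow $X$ from Theorem \ref{thm:main} by passing to the limit in the integral equation of Definition \ref{def:lagrangian-flow}(i) along a subsequence, using the uniform bounds to control $\int_0^t H_n(s,X_n(s,x))\,ds$ for a.e. $x$. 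The main obstacle throughout is the uniform-in-$n$ handling of the singularity: unlike in \cite{crippa-delellis}, the vector field is genuinely unbounded, so every push-forward and every Lipschitz bound must be localized via the good set/bad set decomposition, and one must verify that the constants in Proposition \ref{lemma:bad-set} — hence in the final estimate — depend only on the quantities listed in the statement and not on $n$.
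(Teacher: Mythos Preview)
Your overall strategy matches the paper's: introduce the Crippa--De Lellis logarithmic functional, split into a good set and a bad set via Proposition~\ref{lemma:bad-set}, use the maximal-function difference-quotient estimate on the good set, and optimize. But there is a genuine error in the ``Gr\"onwall'' step that would prevent you from reaching the exponent~$1/3$.

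The whole point of the logarithmic functional is that \emph{no} Gr\"onwall inequality is needed. On the good set, where $H_n$ is Lipschitz with constant $C/\eps^2$, the difference-quotient bound gives
\[
\frac{|H_n(\tau,X_n)-H_n(\tau,X_m)|}{\rho+|X_n-X_m|}
\;\le\;
\frac{(C/\eps^{2})\,|X_n-X_m|}{\rho+|X_n-X_m|}
\;\le\;
\frac{C}{\eps^{2}},
\]
and similarly the $v_n$-part contributes $C\bigl(M\nabla v_n(X_n)+M\nabla v_n(X_m)\bigr)$. These are pointwise bounds on the \emph{integrand} of $\Phi'_{n,m}$, not on $\Phi_{n,m}$ itself, so after integrating over $[0,T]\times B_R$ one obtains the \emph{additive} estimate
\[
\Phi_{n,m}\;\le\; C\eps^{\gamma}\,|\ln\rho|\;+\;\frac{C}{\eps^{2}}\;+\;C\|\nabla v\|_{L^1(L^p)}\;+\;\frac{C}{\rho}\,\delta(n,m),
\]
the first term coming from the bad set. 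Taking $\rho=\delta$ and $\eps=|\ln\delta|^{-1/3}$ (with $\gamma=1$, since $(H'_1)$ allows $q=\infty$ in Proposition~\ref{lemma:bad-set}) gives $\Phi_{n,m}\le C|\ln\delta|^{2/3}$; a final Chebyshev argument then yields the rate $|\ln\delta|^{-1/3}$. Your bound $\Phi_{n,m}\le \exp(C/\eps^{2})\bigl(\tfrac{C}{\rho}\delta+C\eps^{\gamma}\bigr)$ instead produces $\Phi_{n,m}\lesssim \exp\bigl(C|\ln\delta|^{2/3}\bigr)$, which diverges and gives no control whatsoever. The three-parameter optimization you describe cannot recover from this exponential loss.

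A second, smaller, point: the uniform confinement of $X_n$ in a fixed ball $B_{\widetilde R}$ is not delicate. Since $K_n(y)\cdot y=0$, one has
\[
\left|\frac{d}{dt}\,|X_n(t,x)-z(t)|\right|\;\le\;|v_n(t,X_n(t,x))|+|\dot z(t)|\;\le\;\|v\|_{L^\infty(L^\infty)}+\|\dot z\|_{L^\infty},
\]
so $|X_n(t,x)|\le R+2\|z\|_{L^\infty}+T\bigl(\|v\|_{L^\infty(L^\infty)}+\|\dot z\|_{L^\infty}\bigr)=:\widetilde R$ for all $n$ and all $x\in B_R$. This elementary orthogonality trick is how the paper defines $\widetilde R$; no appeal to the abstract theory of the first part is needed, and there is no issue of the integral $\int_0^T|H(s,X_n(s,x))|\,ds$ being only a.e.\ finite.
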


\medskip

To conclude this introduction, we describe the vortex-wave system and the connection between the present work and this system.
 In two-dimensional incompressible fluids,  we consider a flow with initial vorticity consisting of the superposition of a diffuse part
$\omega_0 \in L^p$ for some $p \geq 1$ and a point vortex located at $z_0 \in \real^2$, with unit strength. 
The evolution of vorticity can be described by a system of equations called the vortex-wave system (with one single point vortex):
\begin{equation}
\label{eq:vortex-wave}
\begin{cases}
\displaystyle \partial_t \omega+(v+H)\cdot \nabla \omega =0\\
\displaystyle v=\frac{1}{2\pi}K\ast \omega\\
\displaystyle H(t,x)=\frac{1}{2\pi}K(x-z(t))\\
\displaystyle \dot{z}(t)=v(t,z(t)).
\end{cases}
\end{equation} 

This system was introduced by Marchioro and Pulvirenti \cite{mar_pul,livrejaune}. There are two natural notions of weak solution for this system, 
one is a solution in the sense of distributions, called {\it Eulerian solution}, while the other is a solution for which the diffuse part of the vorticity 
is constant along the trajectories of the flow, called {\it Lagrangian solution}, see \cite{bresiliens-miot,clacave-miot} for precise definitions.  (By
`trajectories of the flow' we mean the flow associated to the vector field $b = v + H$ above.) In \cite{mar_pul,livrejaune},
Marchioro and Pulvirenti established global existence of a Lagrangian solution with $\omega\in L^\infty(L^1\cap L^\infty)$. In \cite{bresiliens-miot},
Lopes Filho, Miot and Nussenzveig Lopes established global existence of an Eulerian solution with vorticity $\omega\in L^\infty(L^1\cap L^p)$, with $p>2$. For any $p>2$ Lagrangian solutions to the vortex-wave system are Eulerian. The converse was left open in \cite{bresiliens-miot}. The issue of the Lagrangian formulation is the natural requirement that flow trajectories should not collide with the point vortex. When $p=+\infty$, almost-Lipschitz regularity for the velocity $(1/2\pi)K\ast \omega$ enables to define flow trajectories in the classical sense, which do not intersect with the point vortex, starting from \emph{any} $x\neq z_0$ \cite{mar_pul,livrejaune}. For $p<+\infty$ this property is unclear.  In Section \ref{section:vortex-wave}, we use the results established in Sections \ref{section:proof} and \ref{section:approx} to show that any Eulerian solution with $\omega\in L^\infty(L^1\cap L^p)$, $p>2$, gives rise to a regular Lagrangian flow such that $\omega$ is constant along the flow trajectories, which do not, generically, collide with the point vortex. As it happens, when $p>2$, the assumptions $(H'_1)-(H'_2)-(H'_3)$ are all satisfied and, in particular, the point vortex trajectory $t\mapsto z(t)$ is Lipschitz.

\section{Proof of Theorem \ref{thm:main}}

\label{section:proof}

In the theory of DiPerna and Lions and Ambrosio \cite{ambrosio, ambrosio-2,dip-lions}, the existence, uniqueness and the stability properties of the flow associated to a field $b$ are linked to the well-posedness of the corresponding continuity equation
\begin{equation}
 \label{eq:PDE} 
\partial_t u+\diver(b u)=0\quad\text{on }(0,T)\times \RR,\quad u(0)=u_0
\end{equation}
and transport equation
\begin{equation}
 \label{eq:transport} 
\partial_t u+b \cdot \nabla u=0\quad\text{on }(0,T)\times \RR,\quad u(0)=u_0.
\end{equation}
Note that one passes formally from the ODE to the continuity and transport equations by noticing that if $X$ solves \eqref{eq:ODE} then $X(t,\cdot)_\# u_0$ solves \eqref{eq:PDE}, and $u_0\circ X(t,\cdot)^{-1}$ solves \eqref{eq:transport}. In the non-smooth case, we consider distributional solutions to \eqref{eq:PDE} and \eqref{eq:transport}.  Such distributional formulations make sense as soon as $bu$ and $u\,\diver(b)$ belong to $L_{\loc}^1$.

As a matter of fact, we have the following general abstract result due to Ambrosio \cite{ambrosio-2}, somewhat extending this connection to the non smooth context:
\begin{theorem}[Ambrosio \cite{ambrosio-2}, Theorems 3.3 and 3.5]
\label{thm:abstract} Let $b$ be a given vector field in $L^1_{\loc}([0,T]\times \RR)$. If existence and uniqueness for  \eqref{eq:PDE} hold in  $L^\infty\left(L^1\cap L^\infty\right)$ then the regular Lagrangian flow of $b$ exists and is unique.
\end{theorem}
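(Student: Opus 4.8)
The plan is to realize the two-way bridge between the Lagrangian description \eqref{eq:ODE} and the Eulerian one \eqref{eq:PDE} through Ambrosio's \emph{superposition principle}, which represents every nonnegative measure-valued solution of the continuity equation as a superposition of Dirac masses transported along integral curves of $b$. The guiding idea is that uniqueness for \eqref{eq:PDE} in $L^\infty(L^1\cap L^\infty)$ forces any such superposition to be \emph{deterministic}, i.e.\ concentrated on the graph of a single map $t\mapsto X(t,x)$ for $\mathcal{L}^2$-a.e.\ initial point $x$; existence and uniqueness of the flow then become two facets of this determinism.

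For the existence half, I would first fix $R>0$ and take the solution $u_t\in L^\infty(L^1\cap L^\infty)$ of \eqref{eq:PDE} with datum $u_0=\1_{B_R}$, which is nonnegative and bounded. Viewing $u_t\,\mathcal{L}^2$ as a nonnegative solution of the continuity equation, the superposition principle (fed by the local integrability of $b$ against the bounded density $u_t$) furnishes a finite measure $\eta$ on the path space $C([0,T];\RR)$, concentrated on absolutely continuous curves $\gamma$ satisfying $\gamma(t)=\gamma(0)+\int_0^t b(s,\gamma(s))\,ds$, with $(e_t)_\#\eta=u_t\,\mathcal{L}^2$ for every $t$, where $e_t(\gamma):=\gamma(t)$. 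Disintegrating $\eta$ along the initial-point map $e_0$ produces probability measures $\eta_x$ carried by curves issuing from $x$, for $\mathcal{L}^2$-a.e.\ $x\in B_R$. Granting the determinism step below, each $\eta_x$ is a Dirac mass $\delta_{\gamma_x}$ on a single integral curve, and I set $X(t,x):=\gamma_x(t)$: property (i) of Definition \ref{def:lagrangian-flow} holds because $\eta$ charges only integral curves, while property (ii) follows from $X(t,\cdot)_\#(\mathcal{L}^2\res B_R)=(e_t)_\#\eta=u_t\,\mathcal{L}^2$ together with the uniform bound $\|u_t\|_{L^\infty}\le L_R$, which supplies the compression constant. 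Exhausting $\RR$ by balls and matching the pieces (consistent by uniqueness) yields a globally defined flow.

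The crux is the determinism step, which I would argue by contraposition. Suppose that on a Borel set of positive measure the $\eta_x$ fail to be Dirac masses; then at some measurably chosen branching time the carried curves split into two disjoint Borel families, and pushing each family forward separately produces two distinct nonnegative solutions of \eqref{eq:PDE} sharing the \emph{same} initial datum $u_0$, contradicting uniqueness. Making the branching time and the splitting measurable, via disintegration and a measurable-selection argument, is the genuinely delicate point; the remaining input is only the superposition principle and the $L^\infty$ bound on $u_t$.

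For uniqueness of the flow, let $X^1,X^2$ be two regular Lagrangian flows and form $\eta^i=\int \delta_{X^i(\cdot,x)}\,d(\mathcal{L}^2\res B_R)(x)$. By properties (i)--(ii) each $\eta^i$ is a superposition solution with $(e_0)_\#\eta^i=\mathcal{L}^2\res B_R$ and $(e_t)_\#\eta^i=u_t^i\,\mathcal{L}^2$, where $u_t^i:=X^i(t,\cdot)_\#(\mathcal{L}^2\res B_R)$ solves \eqref{eq:PDE} with datum $\1_{B_R}$ by the ODE-to-PDE passage recalled after \eqref{eq:transport}. Uniqueness for \eqref{eq:PDE} gives $u_t^1=u_t^2$, so $\eta:=\tfrac12(\eta^1+\eta^2)$ is again a superposition of the \emph{same} solution; the determinism step then forces $\eta_x=\tfrac12(\delta_{X^1(\cdot,x)}+\delta_{X^2(\cdot,x)})$ to be a Dirac mass for a.e.\ $x$, i.e.\ $X^1(\cdot,x)=X^2(\cdot,x)$. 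The main obstacle throughout is the measure-theoretic determinism argument rather than any quantitative estimate; once it is secured, both existence and uniqueness drop out of the same representation.
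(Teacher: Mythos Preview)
The paper does not give its own proof of this theorem: it is stated as a quotation of Ambrosio \cite{ambrosio-2}, Theorems 3.3 and 3.5, and is invoked as a black box in the proof of Theorem \ref{thm:main}. There is therefore nothing in the paper to compare your proposal against.

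That said, your sketch is essentially the architecture of Ambrosio's original argument: the superposition principle represents nonnegative bounded solutions of the continuity equation as push-forwards of a measure $\eta$ on path space concentrated on integral curves; disintegration over $e_0$ plus PDE uniqueness forces $\eta_x$ to be Dirac for a.e.\ $x$ (the ``determinism'' step), yielding existence of the flow; and for uniqueness one averages two candidate flows and reapplies determinism to the mixture. Your identification of the measurable-selection/branching argument as the delicate point is accurate. One minor caution: to invoke the superposition principle one needs $\int_0^T\!\!\int |b|\,u_t\,dx\,dt<\infty$ (or a suitable localized version), which follows here from $b\in L^1_{\loc}$ together with $u\in L^\infty(L^1\cap L^\infty)$ only once you know $u_t$ has some spatial decay or compact support; in Ambrosio's framework this is handled by working with compactly supported data and a growth condition on $b$, so you may want to flag that hypothesis explicitly rather than leave it implicit.
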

And, besides, existence and uniqueness for \eqref{eq:PDE} hold for vector fields satisfying the assumptions $(H_1)-(H_2)-(H_3)$ or $(H'_1)-(H'_2)-(H'_3)$ \cite{ambrosio,dip-lions}.

Now, in the case where $b$ is given by \eqref{eq:ansatz}, the PDE well-posedness results cannot be applied directly because of the singular field $H$. However, the following holds:

\begin{proposition}
\label{prop:uniqueness} Let $b$ be given by \eqref{eq:ansatz}.

\noindent (1) Let $v$ satisfy the assumptions $(H_1)-(H_2)-(H_3)$. Let $u_0\in L^1\cap L^\infty$. Then \eqref{eq:PDE} has a unique solution $u\in L^\infty\left(L^1\cap L^\infty\right)$.

\noindent (2) Let $v$ satisfy the assumptions $(H'_1)-(H'_2)-(H'_3)$. Let $u_0\in L^1\cap L^r$, with $r>2$. Then \eqref{eq:PDE} has a unique solution $u\in L^\infty\left(L^1\cap L^r\right)$.

\end{proposition}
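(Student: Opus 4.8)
\emph{Set-up and plan.} We prove (1) and (2) in parallel, the only differences being which commutator lemma is used for the regular part and the Lebesgue exponent of the solution. Put $\Lambda:=\exp\!\big(\int_0^T\|\diver v(s,\cdot)\|_{L^\infty}\,ds\big)$, finite by $(H_3)$, resp.\ $(H'_3)$. Since $\diver H=0$ we have $\diver b=\diver v$ throughout. The distributional formulation of \eqref{eq:PDE} makes sense in both cases: $vu\in L^1_{\loc}$, and $Hu\in L^1_{\loc}$ because $H\in L^1_{\loc}$ and $u\in L^\infty$ in case (1), while in case (2) the hypothesis $r>2$ is exactly what gives $H\in L^{r'}_{\loc}$ and hence $Hu\in L^1_{\loc}$. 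The plan is: existence by regularizing the singular field and passing to the limit; uniqueness by showing, via an extension of the renormalization result of Lacave and Miot \cite{clacave-miot}, that any solution is renormalized, and then deducing conservation of the $L^1$ norm.

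\emph{Existence.} Write $H(t,x)=\nabla^\perp_x\log|x-z(t)|$ up to an irrelevant constant factor. Fix a smooth, globally Lipschitz $G^\delta:\RR\to\R$ with $G^\delta(y)=\log|y|$ for $|y|\ge\delta$, and set $H^\delta(t,x):=\nabla^\perp G^\delta(x-z(t))$ and $b^\delta:=v+H^\delta$. Then $H^\delta$ is smooth and bounded on $[0,T]\times\RR$, divergence free, and has the same decay as $H$ at infinity, so $b^\delta$ satisfies $(H_1)-(H_2)-(H_3)$ (resp.\ $(H'_1)-(H'_2)-(H'_3)$) with $\diver b^\delta=\diver v$; by the theory of DiPerna--Lions and Ambrosio \cite{dip-lions,ambrosio}, \eqref{eq:PDE} with field $b^\delta$ has a unique solution $u^\delta$ in the required class, satisfying the standard a priori bound $\|u^\delta(t,\cdot)\|_{L^p}\le\Lambda\|u_0\|_{L^p}$ uniformly in $\delta$, for $p\in[1,\infty]$, resp.\ $p\in[1,r]$. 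I would extract a weak-$*$ limit $u$ and pass to the limit in the weak formulation; the only nontrivial term is $\diver(H^\delta u^\delta)$, which converges to $\diver(Hu)$ because $H^\delta\to H$ strongly in $L^1([0,T],L^{s}_{\loc})$ for every $s<2$ (take $s=r'$ in case (2)) while $u^\delta\rightharpoonup u$ weakly in the H\"older-dual space. This gives a solution in the required class.

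\emph{Uniqueness: renormalization set-up.} By linearity it suffices to show that a solution $u$ in the given class with $u_0=0$ vanishes. I would prove that $u$ is \emph{renormalized}: for every $\beta\in C^1(\R)\cap W^{1,\infty}(\R)$ with $\beta(0)=0$,
\begin{equation*}
\partial_t\beta(u)+\diver\!\big(b\,\beta(u)\big)=(\diver v)\,\big(\beta(u)-u\,\beta'(u)\big)\qquad\text{in }\Dc'\big((0,T)\times\RR\big).
\end{equation*}
To access the \emph{moving} singularity, change frame: $\tilde u(t,y):=u(t,y+z(t))$ solves \eqref{eq:PDE} with field $\tilde b=\tilde v+K$, where $\tilde v(t,y):=v(t,y+z(t))-\dot z(t)$; since $\dot z\in L^\infty$ by \eqref{eq:ODEvortex} and is constant in $y$, the field $\tilde v$ inherits the local $BV$ (resp.\ $W^{1,p}$) regularity, the divergence, and the growth condition of $v$, while the singularity of $\tilde b$ now sits at the fixed origin. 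Mollifying in space ($\tilde u_\eps:=\tilde u*\rho_\eps$), the equation for $\tilde u_\eps$ yields the renormalized identity for $\beta(\tilde u_\eps)$ up to a right-hand side $\beta'(\tilde u_\eps)\,(R_\eps^{\tilde v}+R_\eps^{K})$, where $R_\eps^{\tilde v}=\diver\big(\tilde u_\eps\tilde v-(\tilde u\tilde v)*\rho_\eps\big)$ and $R_\eps^{K}=\diver\big(\tilde u_\eps K-(\tilde u K)*\rho_\eps\big)$ are the DiPerna--Lions commutators attached to $\tilde v$ and to $K$.

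\emph{The commutators, conclusion, and the main obstacle.} The regular commutator $R_\eps^{\tilde v}\to0$ in $L^1_{\loc}$ by the Ambrosio (case (1)) resp.\ DiPerna--Lions (case (2)) commutator lemma \cite{ambrosio,dip-lions}, so $\beta'(\tilde u_\eps)R_\eps^{\tilde v}\to0$ in $L^1_{\loc}$. The new point is $R_\eps^{K}$. Here I would use that $K=\nabla^\perp\log|\cdot|$ is divergence free, so $R_\eps^{K}$ is the divergence of the \emph{ordinary} commutator $\tilde u_\eps K-(\tilde u K)*\rho_\eps$ between mollification and multiplication by $K$; since $K\in L^{s}_{\loc}$ for every $s<2$, taking $s=r'$ (resp.\ any $s<2$ when $\tilde u\in L^\infty$) and using $\tilde u_\eps\to\tilde u$ in $L^{s'}_{\loc}$ together with $\tilde uK\in L^1_{\loc}$, this commutator tends to $0$ in $L^1_{\loc}$, hence $R_\eps^{K}\to0$ in $W^{-1,1}_{\loc}$. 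The hard part will be to upgrade this into $\beta'(\tilde u_\eps)\,R_\eps^{K}\to0$ in $\Dc'$: because $R_\eps^{K}$ converges only in the negative-order space $W^{-1,1}_{\loc}$ while $\beta'(\tilde u_\eps)$ converges only weakly-$*$, one must exploit the precise structure of $K=\nabla^\perp\log|\cdot|$ together with the borderline integrability $\tilde u\in L^r$, $r>2$ (resp.\ $\tilde u\in L^\infty$) --- this is precisely what the renormalization argument of Lacave and Miot \cite{clacave-miot} achieves for fields $K(\cdot-z)$, and what has to be extended here to the sum $v+K$, to a moving $z$, and to $L^r$ (rather than $L^\infty$) solutions; this is the reason why the failure of $H\in BV$ is not fatal. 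Granting this, the renormalized identity passes to the limit $\eps\to0$ and, undoing the frame change, holds for $u$ and $b$. Applying it with $\beta=\beta_\nu(s):=\sqrt{s^2+\nu^2}-\nu$ (so that $|\beta_\nu'|\le1$, $\beta_\nu(0)=0$, $\beta_\nu(u)\le|u|$, and $0\le u\beta_\nu'(u)-\beta_\nu(u)\le\nu$) and integrating over $\RR$ --- the flux at infinity vanishes since $b/(1+|x|)\in L^1(L^1)+L^1(L^\infty)$ (by $(H_1)$, resp.\ $(H'_1)$, and the fact that $H$ satisfies $(H_1)$) and $\beta_\nu(u)\le|u|\in L^\infty(L^1)$ --- we get $\frac{d}{dt}\int_{\RR}\beta_\nu(u(t,x))\,dx=\int_{\RR}(\diver v)(\beta_\nu(u)-u\beta_\nu'(u))\,dx$. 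Letting $\nu\to0$, the left-hand side tends to $\frac{d}{dt}\int_{\RR}|u(t,x)|\,dx$ and the right-hand side tends to $0$ in $L^1([0,T])$; since $\int_{\RR}|u(0,x)|\,dx=0$, this forces $u\equiv0$.
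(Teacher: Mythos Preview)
Your existence argument and the overall uniqueness strategy (renormalization, then conservation of the $L^1$ norm) are the same as the paper's. The difference --- and the place where your proof has a genuine gap --- is in how the singular commutator for $K$ is handled. You correctly observe that the plain commutator $\tilde u_\eps K-(\tilde u K)\ast\rho_\eps$ goes to zero only in $L^1_{\loc}$, hence $R_\eps^K\to 0$ only in $W^{-1,1}_{\loc}$, and you then flag the ``upgrade'' of $\beta'(\tilde u_\eps)\,R_\eps^K\to 0$ as the hard part and defer it to \cite{clacave-miot}. But no such upgrade is what happens there or in the paper, and your framing via integrability of $K$ alone will not produce it: the missing structural fact is the \emph{orthogonality} $K(y)\cdot y=0$. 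The paper does not mollify across the singularity at all. Instead it first establishes the renormalized identity on the complement of $\{(t,z(t))\}$ (there $H$ is smooth, so the Ambrosio/DiPerna--Lions commutator lemma applies to $v$ and the $H$-commutator vanishes classically), and then removes the singularity by testing against $\varphi\,\chi_\eps$ with $\chi_\eps(t,x)=\chi\big((x-z(t))/\eps\big)$ a radial cutoff. The two extra terms produced are $H\cdot\nabla\chi_\eps$, which is \emph{identically zero} by the orthogonality above, and $-\eps^{-1}\dot z(t)\cdot\nabla\chi\big((x-z(t))/\eps\big)$, which is $O(\eps^{-1})$ on a set of measure $O(\eps^2)$ and hence vanishes in the limit thanks to $\dot z\in L^\infty$ and $u\in L^\infty(L^r)$ with $r>2$ (resp.\ $r=\infty$). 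This is the actual content behind the Lacave--Miot reference.

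Two further remarks. First, your change of frame to freeze the singularity is unnecessary: the paper works directly with the moving point via the time-dependent cutoff $\chi_\eps(t,x)$, which is where the Lipschitz bound \eqref{eq:ODEvortex} on $z$ enters. Second, once the orthogonality is used, the constraint $r>2$ in case~(2) is needed only to make $Hu\in L^1_{\loc}$ (so the distributional formulation is meaningful) and to control the $\dot z$-term above; it plays no role in any commutator ``upgrade''.
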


\begin{proof}

First, existence of a distributional solution follows in both cases from standard regularization arguments.

The argument for uniqueness is strictly analogous to  the one of Lacave and Miot \cite{clacave-miot}. We give the main lines for the reader's convenience. First, using the by now standard methods introduced in \cite{ambrosio,dip-lions}, it suffices to show that any solution $u$ satisfies the renormalization property:
\begin{equation}\label{eq:renorm}
\partial_t |u|+\diver((v+H)|u|)=0.
\end{equation}
The arguments leading to \eqref{eq:renorm} put together computations as in \cite{dip-lions} for case (2), and \cite{ambrosio} for case (1), with the renormalization property for the single vector field $H$ in both cases, i.e.
\begin{equation}\label{eq:renorm-2}
\partial_t |u|+\diver(H|u|)=0\quad \text{on } (0,T)\times \RR.
\end{equation}
As in \cite{clacave-miot}, we first observe that \eqref{eq:renorm-2} holds on the complement of the set $\{(t,z(t)),\: t\in [0,T]\}$. We next establish \eqref{eq:renorm-2} in $(0,T)\times \RR$. Let $\varphi\in C_c^\infty((0,T)\times \RR)$ and let $\chi \in C^\infty(\RR)$ be a radial function such that $0\leq \chi\leq 1$, $\chi=0$ on $B_{1/2}$ and $\chi=1$ on $B_1^c$. For $\eps>0$ we set $\chi_\eps(t,x)=\chi((x-z(t))/\eps)$ and $\varphi_\eps=\varphi\chi_\eps$. Since $\varphi_\eps$ is compactly supported off the set  $\{(t,z(t)),\: t\in [0,T]\}$ we know that
\begin{equation*}\begin{split}
\iint |u|\chi_\eps
&(\partial_t \varphi+ H\cdot \nabla \varphi)\,dx\,dt\\
&+\iint |u|\varphi
\left(-\frac{\dot{z}(t)}{\eps}\cdot \nabla \chi\left(\frac{x-z(t)}{\eps}\right)+H(t,x)\cdot \nabla \chi_\eps(t,x)\right)\,dx\,dt=0.
\end{split}
\end{equation*}
We remark that $H\cdot \nabla \chi_\eps=0$. Therefore in view of the bound on $\dot{z}$ the second term vanishes when $\eps$ tend to $0$ and we finally obtain \eqref{eq:renorm-2} by applying Lebesgue's theorem to the first term.
\end{proof}

Combining Part (1) of Proposition \ref{prop:uniqueness} and Theorem \ref{thm:abstract} we obtain the existence and uniqueness of the regular Lagrangian flow $X$ in Theorem \ref{thm:main}.
Therefore we only have to prove that for $\mathcal{L}^2$-a.e. $x\in \RR$ no collision between $X(t,x)$ and $z(t)$ occurs on $[0,T]$. This is a direct consequence of the  following
\begin{proposition}
 \label{lemma:bad-set}
For $0<\eps<1$ and $R>0$, let
\begin{equation*}
 P(\eps,R)=\left\{x\in B_R\setminus S\quad \text{s.t.}\quad \min_{t\in [0,T]}|X(t,x)-z(t)|<\eps\right \},
\end{equation*}
where $S$ is as in Definition \ref{def:lagrangian-flow}.
Then 
\begin{equation*}
  \mathcal{L}^2(P(\eps,R))\leq C(T,L_R,\|v\|_{L^\infty(L^q)}+\|\dot{z}\|_{L^\infty}) \eps^{1-\frac{2}{q}}.
\end{equation*}
\end{proposition}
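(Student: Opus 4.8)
The plan is to estimate the Lebesgue measure of the "bad set" $P(\eps,R)$ by controlling, in an integral sense, how close a Lagrangian trajectory can get to the moving singularity. The key quantity to control is the distance $d_x(t) := |X(t,x) - z(t)|$. The idea is that if $d_x(t)$ becomes small, then along that trajectory the field $H$ is large (of order $1/d_x$), and one should be able to bound the time spent near the singularity. However, the cleanest route — following the Vlasov--Poisson analogy of Caprino--Marchioro--Miot--Pulvirenti \cite{italiens-miot} cited just before the statement — is probably not to work trajectory-by-trajectory but to average over $x \in B_R$.

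First I would use the measure-preservation-type bound (ii) from Definition \ref{def:lagrangian-flow}: for every fixed $t$, the pushforward $X(t,\cdot)_\# (\mathcal{L}^2 \res B_R) \le L_R \mathcal{L}^2$. Combined with the fact that $x \mapsto |x-z(t)|^{-s}$ is locally integrable in $\RR$ precisely for $s < 2$, this lets me estimate, for any fixed $t$,
\begin{equation*}
\int_{B_R} \frac{dx}{|X(t,x)-z(t)|^{s}} \le L_R \int_{B_{\widetilde R}} \frac{dy}{|y-z(t)|^{s}} \le C(L_R,\widetilde R,s)
\end{equation*}
for $s<2$, where $\widetilde R$ absorbs $R$, $T$ and $\|z\|_{L^\infty}$ together with the a priori bound on $|X(t,x)|$ coming from $(H_1)$, $(H_4)$ and $\|z\|_{W^{1,\infty}}$ (the field $b$ has at most linear growth in a suitable averaged sense, so trajectories starting in $B_R$ stay in a fixed ball on $[0,T]$). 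Integrating this in $t \in [0,T]$ gives a bound on $\int_0^T\!\!\int_{B_R} d_x(t)^{-s}\,dx\,dt$ that is finite for every $s<2$.

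Next I would convert this space-time integrability into the pointwise-in-time statement defining $P(\eps,R)$. The mechanism is a differential inequality: for $x \notin S$ one has, for a.e.\ $t$,
\begin{equation*}
\Big|\frac{d}{dt} d_x(t)\Big| \le |b(t,X(t,x))| + |\dot z(t)| \le |v(t,X(t,x))| + \frac{1}{d_x(t)} + \|\dot z\|_{L^\infty},
\end{equation*}
since $|H(t,X(t,x))| = |K(X-z)| = d_x(t)^{-1}$. Thus if $d_x$ drops below $\eps$ at some time $t_*$, it must have been below $2\eps$ (say) on a time interval around $t_*$ of length at least $c\,\eps^2$, because the velocity of $d_x$ is at most $O(1/\eps)$ there (the $|v|$ and $\|\dot z\|$ terms contribute only $O(1)$ on a short interval, using $(H_4)$ and Hölder to control $\int |v(t,X(t,x))|\,dt$ on a small set). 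Hence for $x \in P(\eps,R)$ we get $\int_0^T \mathbf{1}_{\{d_x(t) < 2\eps\}}\,dt \ge c\,\eps^2$, so
\begin{equation*}
c\,\eps^2 \,\mathcal{L}^2(P(\eps,R)) \le \int_{P(\eps,R)}\!\!\int_0^T \mathbf{1}_{\{d_x(t)<2\eps\}}\,dt\,dx \le (2\eps)^{s}\int_0^T\!\!\int_{B_R} d_x(t)^{-s}\,dx\,dt,
\end{equation*}
using $\mathbf{1}_{\{d<2\eps\}} \le (2\eps/d)^{s}$. Choosing $s$ close to $2$, precisely $s = 2 - 2/q$ so that the exponent works out (this is where $(H_4)$ with exponent $q$ enters, both in making $d_x^{-s}$ integrable against the pushforward and in the Hölder estimate of $\int|v|\,dt$), yields $\mathcal{L}^2(P(\eps,R)) \le C\,\eps^{s-2} = C\,\eps^{-2/q}$ — which is the wrong sign. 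The resolution is that the short-interval length is not $\eps^2$ but larger: one must track that while $d_x < 2\eps$ the dominant term in $|d_x'|$ can be taken as $\eps^{-1}$ only after also using that $\int |v(t,X(t,x))|\,dt$ over the relevant small time set is $O(|\text{set}|^{1-1/q})$ by Hölder with $(H_4)$; balancing these gives the interval length $\gtrsim \eps^{2}$ up to a correction, and carrying the $q$-dependence carefully through produces the stated exponent $1 - 2/q$.

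The main obstacle is exactly this last bookkeeping: getting the exponent $1-\tfrac2q$ rather than a worse one requires choosing $s$, the near-singularity time-interval length, and the Hölder split all consistently, and making sure the constant depends only on $T$, $L_R$, and $\|v\|_{L^\infty(L^q)} + \|\dot z\|_{L^\infty}$ as claimed. A secondary technical point is justifying the differential inequality for $d_x(t)$ for a.e.\ $x$: this follows from part (i) of Definition \ref{def:lagrangian-flow}, which gives absolute continuity of $t \mapsto X(t,x)$ for $x \notin S$, together with $z \in W^{1,\infty}$, so $d_x$ is absolutely continuous and the chain rule applies a.e.; one only needs $d_x(t) > 0$ on the interval in question, which holds by the definition of $P(\eps,R)$ restricted to times where $d_x < 2\eps$ but has not yet hit $0$, and the argument shows it never reaches $0$.
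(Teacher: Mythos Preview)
Your proposal has a genuine gap: you miss the key geometric observation that makes the proposition work, namely that the singular field $H$ is \emph{tangential} and therefore does not contribute to $\frac{d}{dt}d_x(t)$. Concretely, since $K(y)\cdot y=0$ for all $y\neq 0$, one has
\[
\frac{d}{dt}|X(t,x)-z(t)|
=\frac{X(t,x)-z(t)}{|X(t,x)-z(t)|}\cdot\bigl(v(t,X(t,x))+H(t,X(t,x))-\dot z(t)\bigr)
=\frac{X-z}{|X-z|}\cdot\bigl(v-\dot z\bigr),
\]
so in fact $\bigl|\frac{d}{dt}d_x(t)\bigr|\le |v(t,X(t,x))|+|\dot z(t)|$, with \emph{no} $1/d_x$ term. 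Your differential inequality keeps the $1/d_x(t)$ contribution from $H$, and that is exactly why your arithmetic collapses: with $|d_x'|\lesssim 1/\eps$ near the singularity, the sojourn time in $\{d_x<2\eps\}$ is only $\gtrsim\eps^2$, and combining this with either the pushforward bound $\mathcal{L}^2(\{d_x(t)<2\eps\})\le CL_R\eps^2$ or your $d_x^{-s}$ integral yields at best $\mathcal{L}^2(P(\eps,R))\le C$, i.e., no decay in $\eps$. Your ``resolution'' paragraph does not recover from this --- balancing the H\"older control of $\int|v|$ against a dominant $1/\eps$ term still leaves the sojourn time at order $\eps^2$.

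With the correct inequality (no $H$ term), the paper's argument runs as follows: partition $[0,T]$ into $\sim T/\Delta T$ subintervals of length $\Delta T=\lambda\eps^\beta$ with $\beta=1+2/q$; on the ``good'' set $A$ where $\int_{t_i}^{t_{i+1}}|v(s,X(s,x))|^q\,ds\le\eps^{-\alpha}$ for each $i$ (here $\alpha=1-2/q$), H\"older gives $\int_{t_i}^{t_{i+1}}|v|\,ds\le(\Delta T)^{1-1/q}\eps^{-\alpha/q}$, and $\beta$ is chosen so that this equals $\lambda^{1-1/q}\eps$. Hence a trajectory that enters $\{d_x<\eps\}$ during $[t_i,t_{i+1}]$ stays in $\{d_x<2\eps\}$ at time $t_{i+1}$, and the pushforward bound gives $\mathcal{L}^2(A\cap P(\eps,R))\le C(T/\Delta T)\,\eps^2=C\eps^{2-\beta}=C\eps^\alpha$. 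The complementary ``bad'' set $\bigcup_i B_i$ is handled by Chebyshev and the pushforward bound, also giving $C\eps^\alpha$. Your integral $\int_0^T\!\int_{B_R}d_x(t)^{-s}\,dx\,dt$ is not needed.
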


\begin{proof}
 We adapt the strategy introduced in \cite{italiens-miot} for the Vlasov-Poisson equation.
Here, we set $\alpha=1-2/q>0$. We introduce
$$
 \Delta T =\lambda \eps^\beta,$$
where $0<\lambda<1$ is a parameter to be determined later, and where $$\beta=\frac{1+\alpha/q}{1-1/q}\geq 1.$$
We set
 $$ N=\left[\frac{T}{\Delta T}\right]-1$$
and we define
\begin{equation*}
 t_i=i\Delta T,\quad i=0,\ldots,N,\quad t_{N+1}=T,
\end{equation*}
so that
\begin{equation*}
[0,T]=\bigcup_{i=0}^N[t_i,t_{i+1}] \quad \text{with }\:|t_{i+1}-t_i|\leq \Delta T,\quad \forall i=0,\ldots,N.
\end{equation*}

We first consider the case $2<q<+\infty$.
We set 
\begin{equation*}
A=\left\{x\in B_R\setminus S:\quad\int_{t_i}^{t_{i+1}}|v(s,X(s,x))|^q\,ds\leq \eps^{-\alpha}, \quad \forall i\in\{0,\ldots,N\}\right\}
\end{equation*}
and for $i\in\{0,\ldots,N\}$ we set
\begin{equation*}
B_i=\left\{x\in B_R\setminus S:\quad\int_{t_i}^{t_{i+1}}|v(s,X(s,x))|^q\,ds\geq \eps^{-\alpha}\right\}.
\end{equation*}
By Chebyshev's inequality and Fubini's theorem we have
\begin{equation*}\begin{split}
\mathcal{L}^2(B_i)&\leq \eps^{\alpha}\int_{B_R}\int_{t_i}^{t_{i+1}}|v(s,X(s,x))|^q\,ds\,dx\\
&= \eps^{\alpha}\int_{t_i}^{t_{i+1}}\int_{B_R}|v(s,X(s,x))|^q\,dx\,ds.
\end{split}
\end{equation*}Using Property (ii) in Definition \ref{def:lagrangian-flow} for $X(s,\cdot)$ and $(H_4)$ we get
\begin{equation*}\begin{split}
\mathcal{L}^2(B_i)&\leq L_R  \eps^{\alpha}\|v\|_{L^\infty(L^q)}(t_{i+1}-t_i).
\end{split}
\end{equation*}
Therefore
\begin{equation}
\label{ineq:large-velocity}
\begin{split}
\mathcal{L}^2\left(\bigcup_{i=0}^N B_i\right)&\leq L_R T \|v\|_{L^\infty(L^q)} \eps^{\alpha}.
\end{split}
\end{equation}

Then, let  $x\in P(\eps,R)\cap A$ and let $s_0\in [0,T]$ such that $$\left|X\left(s_0,x\right)-z(s_0)\right|<\eps.$$ We can assume that $s_0\in (t_i,t_{i+1})$ for some $i\in \{0,\ldots,N\}$. Let $s_1\leq t_{i+1}$ maximal such that $\left|X\left(t,x\right)-z(t)\right|<2\eps$ on $[s_0,s_1)$. If $s_1=t_{i+1}$ then $x\in X(t_{i+1},\cdot)^{-1}\big( B(z(t_{i+1}),2\eps)\big)\cap B_R$. We assume then that $s_1<t_{i+1}$.  For $\mathcal{L}^1$-a.e. $t\in [s_0,s_1)$ we have $\dot{X}(t,x)=b(t,X(t,x))$. Now we observe that, 
even though $b$ is not uniformly bounded, the modulus $|X(t,x)-z(t)|$ is
H\"older continuous in time on the set $A$. Indeed, for $\mathcal{L}^1$-a.e. $t\in [s_0,s_1)$ such that $X(t,x)\neq z(t)$ we get, using that 
$$K(y)\cdot y=0,\quad \forall y\in \RR\setminus\{0\},$$
\begin{equation*}
\begin{split}
\frac{d}{dt}|X(t,x)-z(t)|
&=
\frac{X(t,x)-z(t)}{|X(t,x)-z(t)|}\cdot 
\big( v(t,X(t,x))-\dot{z}(t)\big),\end{split}\end{equation*}
hence
\begin{equation}
\label{ineq:bounded-derivative-flow}
\begin{split}
 \left|\frac{d}{dt}|X(t,x)-z(t)|\right|
&\leq |v(t,X(t,x))|+|\dot{z}(t)|.
\end{split}
\end{equation} 
Hence for all $t\in[s_0,s_1]$ we have by H\"older inequality
\begin{equation*}
\begin{split}
|X&(t,x)-z(t)|=|X(s_0,x)-z(s_0)|+\int_{s_0}^t \frac{d}{ds}|X(s,x)-z(s)|\,ds\\
&<\eps +\int_{s_0}^t|v(s,X(s,x))|\,ds+\int_{s_0}^t|\dot{z}(s)|\,ds\\
&\leq\eps+(t_{i+1}-t_i)^{1-\frac{1}{q}}\left(\int_{t_i}^{t_{i+1}}|v(s,X(s,x))|^q\,ds\right)^{\frac{1}{q}}+\|\dot{z}\|_{L^\infty}(t_{i+1}-t_i).\end{split}\end{equation*}

Finally, by definition of the set $A$ and by definition of $\beta\geq 1$ we get
\begin{equation*}
\begin{split}
|X(t,x)-z(t)|
&\leq\eps+\lambda^{1-\frac{1}{q}}\eps^{\beta(1-\frac{1}{q})-\frac{\alpha}{q}}+\|\dot{z}\|_{L^\infty}\lambda \eps^{\beta}\\
&\leq \eps+\lambda^{1-\frac{1}{q}}\eps+\|\dot{z}\|_{L^\infty}\lambda \eps.
\end{split}\end{equation*}
Now we choose $\lambda$ so that
$$\lambda^{1-\frac{1}{q}}+\|\dot{z}\|_{L^\infty}\lambda<1.$$
For this choice of $\lambda$ we obtain $|X(s_1,x)-z(s_1)|<2\eps$, which contradicts the definition of $s_1$ and shows that we must have $s_1=t_{i+1}$. It follows that 
$$A\cap P(\eps,R)\subset \bigcup_{i=0}^N X(t_{i+1},\cdot)^{-1}\big(B(z(t_{i+1}),2\eps)\big)\cap B_R.$$
Therefore in view of (ii) in Definition \ref{def:lagrangian-flow}, 
\begin{equation*} 
\begin{split}
\mathcal{L}^2(A\cap P(\eps,R))&\leq  \sum_{i=0}^{N} 
\mathcal{L}^2
\Big(X(t_{i+1},\cdot)^{-1}\big(B(z(t_{i+1}),2\eps)\big)\cap B_R\Big)\\
&\leq (N+1)L_R (4\pi \eps^2)
\end{split}
\end{equation*}
and finally
\begin{equation} \label{ineq:small-velocity}
\begin{split}
\mathcal{L}^2(A\cap P(\eps,R))\leq 4\pi L_R T\lambda^{-1}\eps^{2-\beta}.
\end{split}
\end{equation}

Combining \eqref{ineq:large-velocity}, \eqref{ineq:small-velocity} and using the definition of $\lambda$ we obtain
\begin{equation*}
\mathcal{L}^2(P(\eps,R))\leq C(T,L_R,\|v\|_{L^\infty(L^q)},\|\dot{z}\|_{L^\infty}) (\eps^\alpha+\eps^{2-\beta}).
\end{equation*}Since $2-\beta=\alpha$, this yields the conclusion.

\medskip

We now study the case where $q=\infty$, which is easier and does not require to introduce the sets $B_i$ and $A$. Indeed, let $x\in P(\eps,R)$. Coming back to \eqref{ineq:bounded-derivative-flow} and proceeding similarly as before we obtain for $s\in [s_0,s_1)$
\begin{equation*}
\begin{split}
|X(t,x)-z(t)|
&\leq \eps+\lambda\eps{}(\|v\|_{L^\infty(L^\infty)}+\|\dot{z}\|_{L^\infty})<2\eps
\end{split}\end{equation*}
provided that 
$$\lambda\eps{}(\|v\|_{L^\infty(L^\infty)}+\|\dot{z}\|_{L^\infty})<1.$$ This shows that 
$$P(\eps,R)\subset \bigcup_{i=0}^N X(t_{i+1},\cdot)^{-1}\big(B(z(t_{i+1}),2\eps)\big)\cap B_R,$$
and the conclusion then follows as before.

\end{proof}

\section{Proof of Theorem \ref{thm:main2}}
\label{section:approx}

We start by defining the smooth approximation involved in Theorem \ref{thm:main2}.
Let $(\rho_n)_{n\in \N}$ be the usual sequence of Friedrichs mollifyers. Let
$v_n=\rho_n\ast v$ and let
\begin{equation*}
 K_n(x)=\frac{x^\perp}{|x|^2+\frac{1}{n^2}},\quad x\in \RR,
\end{equation*}
which defines a globally bounded, divergence free and smooth vector field on $\RR$. We finally set
$$b_n(t,x)=v_n(t,x)+K_n(x-z(t)).$$

We first remark that $(|X_n(t,x)-z(t)|)_{n\in \N}$ is uniformly Lipschitz in time even though $b_n\notin L^\infty$. Indeed, by the same computation leading to
 \eqref{ineq:bounded-derivative-flow}, using that $K_n(y)\cdot y=0$ and $(H'_1)$,  we have
\begin{equation}
\label{ineq:derivative-flow-2}
\begin{split}
 \left|\frac{d}{dt}|X_n(t,x)-z(t)|\right|
&\leq |v_n(t,X_n(t,x))|+|\dot{z}(t)|\leq \|v\|_{L^\infty(L^\infty)}+\|\dot{z}\|_{L^\infty}.
\end{split}
\end{equation} In particular, we have the local equiboundedness property
\begin{equation}
 \label{ineq:bounded-flow}
\left\|X_n\right\|_{L^\infty\left([0,T]\times B_R\right)}\leq R+2\|z\|_{L^\infty}+(\|v\|_{L^\infty(L^\infty)}+\|\dot{z}\|_{L^\infty})T.
\end{equation}
On the other hand,  since $\diver(b_n)=\rho_n\ast \diver(v)$ we infer from $(H'_3)$ that 
\begin{equation}\label{ineq:jacobian1}
\sup_{n\geq 0} \int_0^T\|\diver(b_n)(s)\|_{L^\infty}\,ds\leq L_0<\infty.\end{equation}  In particular it follows from the standard theory on Jacobians that 
\begin{equation}\label{ineq:liouville} X_n(t,\cdot)_\#\mathcal{L}^2\leq e^{L_0}\mathcal{L}^2,\quad \forall t\in [0,T].\end{equation}

\medskip
Part of our subsequent analysis is borrowed from \cite{crippa-delellis}: we introduce
\begin{equation*}
 \widetilde{R}=R+2\|z\|_{L^\infty}+(\|v\|_{L^\infty(L^\infty)}+\|\dot{z}\|_{L^\infty})T
\end{equation*}
and
\begin{equation*}\begin{split}
 \delta(n,m)&= \|b_n-b_m\|_{L^1([0,T]\times B_{\widetilde{R}})}.
\end{split}
\end{equation*}
We consider the positive quantity
\begin{equation}\label{def:g}
 g_{n,m}=\int_{B_R}\sup_{t\in [0,T]}\ln\left( \frac{|X_n(t,x)-X_m(t,x)|}{\delta(n,m)}+1\right)\,dx.
\end{equation}
\begin{lemma}
 \label{lemma:ineq-g}
We have
\begin{equation*}
 g_{m,n}\leq C |\ln\delta(n,m)|^{2/3}
\end{equation*}
where $C$ depends only on $R$, $T$, $L_0$, $\|v\|_{L^\infty(L^\infty)}$, 
$\|\dot{z}\|_{L^\infty}$, and $\|\nabla v\|_{L^1(L^p)}$.
\end{lemma}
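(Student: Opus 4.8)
The plan is to adapt the Crippa--De Lellis estimate from \cite{crippa-delellis} to the present situation where the regularizing vector fields $b_n = v_n + K_n(\cdot - z(t))$ are only singular in a controlled way near the moving point $z(t)$. The starting point is the standard differential inequality for the logarithmic quantity: setting $w_{n,m}(t,x) = |X_n(t,x) - X_m(t,x)|$ and differentiating $\ln(w_{n,m}/\delta + 1)$ in time along the flow, one obtains, for a.e.\ $t$,
\begin{equation*}
\frac{d}{dt}\ln\!\left(\frac{w_{n,m}(t,x)}{\delta(n,m)}+1\right) \leq \frac{|b_n(t,X_n)-b_m(t,X_m)|}{w_{n,m}+\delta(n,m)} \leq \frac{|b_n(t,X_n)-b_n(t,X_m)|}{w_{n,m}+\delta(n,m)} + \frac{|b_n(t,X_m)-b_m(t,X_m)|}{\delta(n,m)}.
\end{equation*}
The second term, after integrating in $t$ and $x$ over $[0,T]\times B_R$ and using the change of variables estimate \eqref{ineq:liouville} (which controls $X_m(t,\cdot)_\#\mathcal L^2$ against $\mathcal L^2$ on $B_{\widetilde R}$ by \eqref{ineq:bounded-flow}), is bounded by $C e^{L_0}\|b_n-b_m\|_{L^1([0,T]\times B_{\widetilde R})}/\delta(n,m) = C e^{L_0}$, hence contributes a harmless constant. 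The first term is the one requiring the difference quotient / maximal function machinery.

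For the first term, I would split $b_n - b_n = (v_n - v_n) + (K_n(\cdot-z) - K_n(\cdot-z))$ evaluated at the two points $X_n$ and $X_m$. The Sobolev part is handled exactly as in \cite{crippa-delellis}: by the pointwise inequality $|v_n(x)-v_n(y)| \leq C|x-y|\big(M|\nabla v_n|(x) + M|\nabla v_n|(y)\big)$ for the Hardy--Littlewood maximal operator $M$, so that $|v_n(t,X_n)-v_n(t,X_m)|/(w_{n,m}+\delta) \leq C\big(M|\nabla v_n|(t,X_n) + M|\nabla v_n|(t,X_m)\big)$; integrating in space over $B_R$, using \eqref{ineq:liouville} and the strong $(p,p)$ bound on $M$ together with $\|\nabla v_n\|_{L^p} \leq \|\nabla v\|_{L^p}$, one gets a bound $C\|\nabla v\|_{L^1(L^p)} \cdot \big(\mathcal L^2(B_{\widetilde R})\big)^{1-1/p}$ after a further H\"older in space to pass from $L^p$ to $L^1(B_R)$ — again a constant. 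The genuinely new part is the kernel term $|K_n(X_n - z(t)) - K_n(X_m - z(t))|$. Here I would exploit that $K_n$ is Lipschitz with $\|\nabla K_n\|_{L^\infty} \lesssim n$, but that is too crude; instead one uses that $|K_n(a) - K_n(b)| \leq C|a-b|/\max(|a|,|b|,1/n)^2 \lesssim C|a-b| \cdot \big(\tfrac{1}{|X_n-z(t)|^2} + \tfrac{1}{|X_m-z(t)|^2}\big)$ roughly, so that the quotient is controlled by $\tfrac{1}{|X_n(t,x)-z(t)|^2} + \tfrac{1}{|X_m(t,x)-z(t)|^2}$. Integrating this in $x$ over $B_R$ and using \eqref{ineq:liouville} to push forward reduces it to $\int_{B_{\widetilde R}} |y-z(t)|^{-2}\,dy$, which is logarithmically divergent; this is where a truncation argument enters, and is the main obstacle.

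To close the argument one introduces a small parameter and removes from $B_R$ the \emph{bad set} of starting points $x$ whose $X_n$- or $X_m$-trajectory comes within distance $\eta$ of $z(t)$ at some time; the logarithmic quantity $g_{n,m}$ is estimated only on the good set, where the kernel term contributes $\lesssim T\log(1/\eta)$, while the bad set has small measure by an argument in the spirit of Proposition \ref{lemma:bad-set} (using \eqref{ineq:derivative-flow-2}, which shows $|X_n(\cdot,x)-z(\cdot)|$ is uniformly Lipschitz, together with the equiboundedness \eqref{ineq:bounded-flow} and \eqref{ineq:liouville}), giving a measure bound like $C\eta$ on the bad set; on this bad set one uses the a priori bound $\sup_t w_{n,m} \leq 2\widetilde R$ so that $\ln(w_{n,m}/\delta + 1) \leq \ln(2\widetilde R/\delta + 1) \leq C|\ln\delta|$. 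Combining, $g_{n,m} \leq C\big(1 + T\log(1/\eta)\big) + C\eta\,|\ln\delta|$, and optimizing the choice $\eta = |\ln\delta|^{-1/3}$ balances $\log(1/\eta) = \tfrac13 \log|\ln\delta| \ll |\ln\delta|^{2/3}$ against $\eta|\ln\delta| = |\ln\delta|^{2/3}$, yielding $g_{n,m} \leq C|\ln\delta(n,m)|^{2/3}$. The hard part, and the place where care is needed, is the simultaneous bookkeeping of the good/bad set decomposition inside the Gronwall-type integration: one must make sure the bad set is defined by a condition that is stable along \emph{both} flows $X_n$ and $X_m$ and that the sup over $t$ in the definition \eqref{def:g} of $g_{n,m}$ is compatible with the pointwise-in-time differential inequality — this is handled, as in \cite{crippa-delellis}, by first proving the estimate with the sup replaced by a fixed time and then noting the right-hand side is monotone, or by integrating the differential inequality up to the (measurable) time realizing the supremum.
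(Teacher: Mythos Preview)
Your proposal is correct and follows the same overall architecture as the paper: the good/bad set decomposition of $B_R$ according to whether the $X_n$- and $X_m$-trajectories stay at distance $\geq\eta$ from $z(\cdot)$, the bad-set measure bound $C\eta$ via Proposition~\ref{lemma:bad-set} (with $q=\infty$, using \eqref{ineq:derivative-flow-2} and \eqref{ineq:liouville}), the trivial bound $C|\ln\delta|$ for the integrand on the bad set, and the Crippa--De Lellis maximal-function argument for the Sobolev part $v_n$. The decomposition $b_n(X_n)-b_m(X_m)=[b_n(X_n)-b_n(X_m)]+[b_n(X_m)-b_m(X_m)]$ and the handling of the second bracket are the same as in the paper (up to interchanging the roles of $n$ and $m$).

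Where you genuinely depart from the paper is in the treatment of the singular kernel on the good set. The paper introduces a cutoff $\chi_\eps$, replaces $H_m$ by $H_{m,\eps}=(K_m\chi_\eps)(\cdot-z)$ on the good set, and then applies the maximal-function inequality to the \emph{full} truncated field $b_{m,\eps}=v_m+H_{m,\eps}$; this costs $\|\nabla H_{m,\eps}\|_{L^1(L^p)}\lesssim \eps^{2/p-2}$, which the paper bounds crudely by $\eps^{-2}$, leading to $g_{n,m}\leq C(\eps|\ln\delta|+\eps^{-2})$ and the optimized exponent $2/3$. You instead exploit the explicit algebraic structure of $K$ via the pointwise Lipschitz bound $|K_n(a)-K_n(b)|\leq C|a-b|\big(|a|^{-2}+|b|^{-2}\big)$ (which indeed holds, by direct computation), then push forward under $X_n$ and $X_m$ and integrate $|y-z(t)|^{-2}$ over the annulus $\{\eta\leq|y-z(t)|\leq\widetilde R\}$ to get only $C\log(1/\eta)$. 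This is both more elementary (no cutoff, no maximal function for the kernel part) and strictly sharper: your inequality $g_{n,m}\leq C\big(1+\log(1/\eta)\big)+C\eta|\ln\delta|$, optimized at $\eta=|\ln\delta|^{-1}$, actually gives $g_{n,m}\leq C\log|\ln\delta|$, which is much better than the stated $|\ln\delta|^{2/3}$; your choice $\eta=|\ln\delta|^{-1/3}$ is suboptimal for your own argument but of course suffices for the lemma as written.
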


\medskip
From now on  $C$ will denote a positive constant depending only on $R$, $T$, $L_0$, $\|v\|_{L^\infty(L^\infty)}$, $\|\dot{z}\|_{L^\infty}$, and $\|\nabla v\|_{L^1(L^p)}$.

\medskip

Before proving Lemma \ref{lemma:ineq-g} we show how it implies Theorem \ref{thm:main2}.  In the following we will sometimes write $\delta$ instead of $\delta(m,n)$.

\medskip

\noindent \textbf{Proof of Theorem \ref{thm:main2} with Lemma \ref{lemma:ineq-g}.}

 We fix $\eta>0$ to be determined later. 
By Chebychev's inequality and Lemma \ref{lemma:ineq-g}  we can find a set $K\subset B_R$ such that $\mathcal{L}^2( B_R\setminus K)\leq \eta$ and
\begin{equation}\label{ineq:cheby}
 \sup_{t\in [0,T]} \ln \left( \frac{ |X_n(t,x)-X_m(t,x)| }{\delta} +1 \right)\leq \frac{C |\ln\delta|^{2/3}}{\eta},\quad 
\text{for }\: x\in K.
\end{equation}
Using \eqref{ineq:bounded-flow}, it follows that
\begin{equation*}
 \begin{split}
  \int_{B_R} &\sup_{t\in [0,T]} |X_n(t,x)-X_m(t,x)|\,dx\\
&\leq \int_{B_R\setminus K} \sup_{t\in [0,T]} |X_n(t,x)-X_m(t,x)|\,dx+\int_{K} \sup_{t\in [0,T]} |X_n(t,x)-X_m(t,x)|\,dx\\
&\leq C\mathcal{L}^2( B_R\setminus K)+C\sup_{x\in K}\sup_{t\in [0,T]}|X_n(t,x)-X_m(t,x)|\\
&\leq C\Big(\eta +\delta \exp\big(C|\ln\delta|^{2/3}/\eta\big)\Big),
 \end{split}
\end{equation*}
where we have used \eqref{ineq:cheby} in the last inequality.
We finally optimize the choice of the parameter $\eta$ as follows. We set
\begin{equation*}
 \eta\equiv \frac{2C}{|\ln \delta|^{1/3}},
\end{equation*}
so that $\exp(C |\ln\delta|^{2/3}/\eta)=\exp(|\ln \delta|/2)=\delta^{-1/2}$. This yields
\begin{equation}\label{ineq:cauchy-2}
 \int_{B_R} \sup_{t\in [0,T]} |X_n(t,x)-X_m(t,x)|\,dx\leq \frac{C}{|\ln \delta(n,m)|^{1/3}}.
\end{equation}
 In particular, we infer that
$(X_{n})_{n\in \N}$ is a Cauchy sequence converging to some $Y:[0,T]\times \RR\to \RR$ in the space $L^1_{\loc}\left(\RR,L^\infty\left([0,T]\right)\right)$. Finally, the fact that $Y$ is the (unique) regular Lagrangian flow associated to $b$ is standard, and we omit the proof.
\hfill $\square$
%By uniqueness,  in order to prove that $Y=X$ it suffices to show that $Y$ is a regular Lagrangian flow %associated to $b$. The arguments are standard but we recall them here for sake of completeness. First, we i%nfer from \eqref{ineq:cauchy-2} that for all $t\in [0,T]$, $X_n(t)$ converges strongly to $Y(t)$ in $L^1_{\loc}(\RR)$. Recalling \eqref{ineq:liouville} we obtain (ii). On the other hand, since $(b_{n})_{n\in \N}$ is a sequence of smooth fields converging to $b$ in $L^1_{\loc}([0,T]\times \RR)$ we obtain by using \eqref{ineq:liouville} (for $X_n(t)$ and (ii) (for $Y$) that $(b_n(X_n))_{n\in \N}$ converges to $b(Y)$ in $L^1_{\loc}([0,T]\times \RR)$ and therefore, for a.e. $x\in \RR$, $b_{n}\left(t,X_{n}(t,x)\right)\to   b\left(t,Y(t,x)\right)$ in  $L^1\left([0,T]\right).  $ Hence $Y$ satisfies (i), and the conclusion follows. 
\begin{remark}
Given the strong convergence of $(X_{n})_{n\in \N}$ to $X$ together with the uniform bound \eqref{ineq:liouville} we infer that under assumptions $(H'_1)-(H'_2)-(H'_3)$ the constant $L_R$ in Definition \ref{def:lagrangian-flow} actually does not depend on $R$.
\end{remark}

\medskip

We finally give the

\noindent \textbf{Proof of Lemma \ref{lemma:ineq-g}.}

Let $\eps>0$ be a small parameter to be chosen later. We consider the set
\begin{equation*}
 P(n,\eps)=\left\{x\in \RR\quad \text{s.t.}\quad \min_{t\in [0,T]}|X_n(t,x)-z(t)|<\eps\right \}. 
\end{equation*}
Using Proposition \ref{lemma:bad-set} applied to $X_n$,  with $q=\infty$,  and thanks to \eqref{ineq:liouville}, we obtain
\begin{equation}\label{ineq:measure-bad}
  \mathcal{L}^2(P(n,\eps))\leq C\eps,
\end{equation}
where $C$ depends only on $T$, $L_0$, $\|v\|_{L^\infty(L^\infty)}$, 
$\|\dot{z}\|_{L^\infty}$, and $\|\nabla v\|_{L^1(L^p)}$. Next, 
\begin{equation*}
 g_{n,m}=\mathcal{G}_{n,m}+\mathcal{B}_{n,m},
\end{equation*}
where
\begin{equation*}
 \begin{split}
\mathcal{G}_{n,m}&=\int_{B_R\setminus \left[ P(n,\eps)\cup P(m,\eps)\right]}
 \sup_{t\in [0,T]}\ln\left(\frac{| X_n(t,x)-X_m(t,x)|}{\delta}+1\right)\,dx,\\
\mathcal{B}_{n,m}&=\int_{ P(n,\eps)\cup P(m,\eps)}
 \sup_{t\in [0,T]}\ln\left(\frac{ |X_n(t,x)-X_m(t,x)|}{\delta}+1\right)\,dx.
\end{split}
\end{equation*}

By \eqref{ineq:bounded-flow} and \eqref{ineq:measure-bad},
\begin{equation}\label{ineq:bad-part}
\mathcal{B}_{n,m}\leq C|\ln \delta|\eps.
\end{equation}

\medskip

We next estimate the second part, for which we can adapt the proof of Theorem 2.9 in \cite{crippa-delellis} for Sobolev vector fields since $H$ is regular off the set $\{(t,z(t)),t\in[0,T]\}$.
 We have
\begin{equation*}
 \begin{split}
\sup_{t\in [0,T]} & \ln\left(\frac{ |X_n(t,x)-X_m(t,x)|}{\delta}+1\right)\\
&\leq \int_0^T 
\left| \frac{d}{dt} X_n(\tau,x)-\frac{d}{dt} X_m(\tau,x)\right|
\left( |X_n(\tau,x)-X_m(\tau,x)|+\delta\right)^{-1}\,d\tau\\
&\leq \int_0^T
\frac{ \left|b_{n}\left(\tau,X_n(\tau,x)\right)-b_{m}\left(\tau,X_m(\tau,x)\right)\right|}
{|X_n(\tau,x)-X_m(\tau,x)|+\delta}
\,d\tau.
\end{split}\end{equation*}
Writing $$b_n(X_n)-b_m(X_m)=\big[b_n(X_n)-b_m(X_n)\big]+\big[b_m(X_n)-b_m(X_m)\big]$$
we further obtain $\mathcal{G}_{n,m}\leq \mathcal{G}^{1}_{n,m}+\mathcal{G}^{2}_{n,m},$ where
\begin{equation*}
 \begin{split}
\mathcal{G}^{1}_{n,m} &
=\frac{1}{\delta} \int_0^T\int_{B_R}
\left|b_{n}\left(\tau,X_n(\tau,x)\right)-b_{m}\left(\tau,X_n(\tau,x)\right)\right|\,dx\,d\tau
\end{split}
\end{equation*}
and
\begin{equation*}
 \begin{split}
\mathcal{G}^{2}_{n,m} &
=  \int_0^T\int_{B_R\setminus \left[ P(n,\eps)\cup P(m,\eps)\right]}
\frac{ \left|b_{m}\left(\tau,X_n(\tau,x)\right)-b_{m}\left(\tau,X_m(\tau,x)\right)\right|}
{|X_n(\tau,x)-X_m(\tau,x)|}\,dx\,d\tau.
\end{split}
\end{equation*}

By definition of $\widetilde{R}$ and by \eqref{ineq:liouville} and \eqref{ineq:bounded-flow} we obtain
\begin{equation}\label{ineq:g1}
 \begin{split}
\mathcal{G}^{1}_{m,n}&\leq \frac{e^{L_0}}{\delta}
\int_0^T \int_{B_{\widetilde{R}}} |b_n-b_m|(\tau,y)\,dy\,d\tau= e^{L_0}.
\end{split}
\end{equation}

We now estimate $\mathcal{G}^2_{m,n}$. Let $0\leq \chi_{\eps}\leq 1$ be a smooth function such that $\chi_{\eps}=0$ on $B(0,\eps/2)$ and $\chi_{\eps}=1$ on $B(0,\eps)^c$ and let 
$$H_{m,\eps}(t,x)=(K_m\chi_\eps)(x-z(t)),\quad b_{m,\eps}=v_m+H_{m,\eps}.$$
For $x\in B_R\setminus \left[ P(n,\eps)\cup P(m,\eps)\right]$ we have $b_{m}(\tau,X_n(\tau,x))=b_{m,\eps}(\tau,X_n(\tau,x))$ and $b_{m}(\tau,X_m(\tau,x))=b_{m,\eps}(\tau,X_m(\tau,x))$ for $\tau\in[0,T]$.

In the following $M f$ denotes the maximal function of $f$. Using the classical estimate of the difference quotient of a function in terms of the maximal function of the derivative (see e.g. Lemma A.3 in \cite{crippa-delellis}) we find
\begin{equation*}\begin{split}
 \int_0^T\int_{B_R}
&\frac{ \left|b_{m,\eps}\left(\tau,X_n(\tau,x)\right)-b_{m,\eps}\left(\tau,X_m(\tau,x)\right)\right|}
{|X_n(\tau,x)-X_m(\tau,x)|}\,dx\,d\tau\\
&\leq C\int_0^T\int_{B_R} \big[M \nabla b_{m,\eps}\left(\tau, X_m(\tau,x)\right)+
M \nabla b_{m,\eps}\left(\tau, X_n(\tau,x)\right)\big]\,dx\,d\tau.
\end{split}
\end{equation*}By using \eqref{ineq:bounded-flow} and \eqref{ineq:liouville} we get
\begin{equation*}\begin{split}
 \mathcal{G}^2_{m,n}
&\leq Ce^{L_0}\int_0^T\int_{B_{\widetilde{R}}} \left|M \nabla b_{m,\eps}\left(\tau,y\right)\right|\,dy\,d\tau\\
&\leq Ce^{L_0}\widetilde{R}^{1-1/p}\int_0^T\|M \nabla b_{m,\eps}(\tau))\|_{L^p(B_{\widetilde{R}})}\,d\tau\\
&\leq  Ce^{L_0}\widetilde{R}^{1-1/p}\Big(\|\nabla v_m\|_{L^1(L^p)}+\|\nabla H_{m,\eps}\|_{L^1(L^p)}\Big).
\end{split}
\end{equation*}
In view of $(H'_2)$ and of the expression of $H_{m,\eps}$ we get
\begin{equation}\label{ineq:g2}
 \mathcal{G}^2_{m,n}\leq \frac{C}{\eps^{2-\frac{2}{p}}}\leq \frac{C}{\eps^{2}} .
\end{equation}

We gather \eqref{ineq:bad-part}, \eqref{ineq:g1} and \eqref{ineq:g2}, obtaining
\begin{equation*}
 g_{m,n}\leq C\big(\eps |\ln\delta|+\eps^{-2}\big).
\end{equation*}
We now optimize our choice of $\eps$, setting $\eps=|\ln \delta|^{-1/3}$,
so that
\begin{equation*}
 g_{m,n}\leq C |\ln\delta|^{2/3}
\end{equation*}
and the conclusion of Lemma \ref{lemma:ineq-g} follows.
\hfill $\square$

\section{Lagrangian solutions to the vortex-wave system}
\label{section:vortex-wave}

We  finally comment on the applications of the previous results to the vortex-wave system \eqref{eq:vortex-wave}. Two notions of weak solution for the vortex-wave system have been introduced: Eulerian solutions and Lagrangian solutions, see \cite{bresiliens-miot,clacave-miot}. These notions coincide when the vorticity $\omega$ belongs to $L^\infty(L^1\cap L^\infty)$ \cite{mar_pul, livrejaune, clacave-miot}. In \cite{bresiliens-miot} the authors establish global existence of an Eulerian solution with $\omega$ belonging to $L^\infty(L^1\cap L^p)$ for $p>2$. We claim that to this Eulerian solution corresponds a unique regular Lagrangian flow and that $\omega$ is constant along the flow trajectories. Indeed, if $p>2$ then  $v=\frac{1}{2\pi}K\ast \omega$ satisfies all assumptions $(H'_1)-(H'_2)-(H'_3)$, therefore also $(H_1)-(H_2)-(H_3)-(H_4)$. Hence, in view of Theorems \ref{thm:main} and \ref{thm:main2} of the present article, there exists a unique regular Lagrangian flow associated to the divergence free  velocity field $b=v+H$. Moreover, it can be readily checked (adapting, e.g., the proof of Theorem 1.3 in \cite{clacave-miot}), that the function $\widetilde{\omega}=X(t,\cdot)_\# \omega_0$ is a  distributional solution in $L^\infty(L^1\cap L^p)$ of the PDE
\begin{equation*}
\partial_t \widetilde{\omega}+(v+H)\cdot \nabla \widetilde{\omega}=0,\quad \widetilde{\omega}(0)=\omega_0.
\end{equation*}
Now, invoking the uniqueness part of Proposition \ref{prop:uniqueness} we obtain $\omega=\widetilde{\omega}$, which establishes our claim.

\medskip

Finally, we mention that Theorems \ref{thm:main} and \ref{thm:main2} can be extended to vector fields $H$ containing several point singularities
$$H(t,x)=\sum_{i=1}^N d_i K(x-z_i(t)),\quad d_i\in \R,$$
under the condition
$$\min_{i\neq j}\min_{t\in[0,T]}|z_i(t)-z_j(t)|>0,$$
 which corresponds to the interaction of several point vortices in the setting of the point vortex system.

\medskip

\textbf{Acknowledgments.} G. C. acknowledges the financial support of the SNSF grant \# 140232.
E. M. acknowledges the financial support of the PICS program of the CNRS (FLAME  project). M. C. L. F., E. M. and H. J. N. L. thank the Brazilian-French Network in Mathematics for its financial support. M. C. L. F. ackowledges the support of CNPq grant \# 303089/2010-5.
H. J. N. L. thanks the support of CNPq grant \# 306331/2010-1 and FAPERJ grant \# 103.197/2012. This work was partially supported by FAPESP grant \# 07/51490-7 and
by FAPERJ-PRONEX.

\end{document}